\documentclass[12pt,reqno]{amsart}
\usepackage{amsmath,amssymb,amsthm,amsfonts,latexsym,mathrsfs,verbatim,comment}
\usepackage[hidelinks,unicode]{hyperref}

\newtheorem{theorem}{Theorem}[section]

\newtheorem{lemma}[theorem]{Lemma}
\newtheorem{proposition}[theorem]{Proposition}
\theoremstyle{remark}
\newtheorem{remark}[theorem]{\bf Remark}

\newcommand{\cA}{\mathcal A}
\newcommand{\cP}{\mathcal P}
\newcommand{\cC}{\mathcal C}
\newcommand{\LL}{\pounds}
\newcommand{\leg}[2]{\left(\frac{#1}{#2}\right)}
\DeclareMathOperator{\logA}{\log_{\mathcal{A}}}

\DeclareMathOperator{\ord}{ord}
\DeclareMathOperator{\rad}{rad}
\DeclareMathOperator{\Rec}{Rec}
\DeclareMathOperator{\im}{im}

\begin{document}
\title[Irrationality of finite logarithms]{Irrationality of finite logarithms \\in a congruence-class ad\`{e}le ring}
\author{Daniel Evans}
\address{Department of Mathematical Sciences, Durham University, DH1 3LE, United Kingdom}
\email{daniel.evans@durham.ac.uk}

\subjclass[2020]{11J72 (primary), 11A41, 11N13, 11C08 (secondary).}

\keywords{Finite logarithms, Fermat quotients, irrationality, cyclotomic polynomials, primes in arithmetic progressions.}

\begin{abstract}
Finite logarithms can be defined in the ``poor man's ad\`{e}le ring" $\mathcal{A}$ using Fermat quotients modulo sufficiently large primes. This ring contains $\mathbb{Q}$ and outside the trivial cases, Matsusaka and Seki have shown that finite logarithms cannot take non-zero rational values in $\mathcal{A}$. Furthermore, a theorem of Silverman shows they are not zero, assuming the $abc$-conjecture. We extend these results to primes restricted to arithmetic progressions of the form $p\equiv 1\bmod m$ by relating Fermat quotients to values of cyclotomic polynomials and their logarithmic derivatives. A signed version of the same argument shows unconditionally that the square of a finite logarithm cannot take non-zero rational values in $\mathcal{A}$.  
As a further application, we show that, subject to the $abc$-conjecture, finite logarithms cannot be quadratic irrational elements of $\mathcal{A}$ in Rosen's theory of finite algebraic numbers.
\end{abstract}

\maketitle

\section{Introduction}
In recent years, there has been growing interest in arithmetic in the so-called {\it poor man's ad\`{e}le ring}
\[\cA:=\left.\left(\;\prod_{p\in\cP}\;\mathbb{Z}/p\mathbb{Z}\right) \middle/ \left(\;\bigoplus_{p\in\cP}\;\mathbb{Z}/p\mathbb{Z}\right)\right.. \]
The direct product and sum are taken over the set of primes $\cP$, and addition and multiplication are taken component-wise.
The field of rational numbers $\mathbb{Q}$ embeds naturally along the diagonal, and in this manner we can view $\mathcal{A}$ as a $\mathbb{Q}$-algebra. 
Explicitly, elements ${\bf t}\in\cA$ can be encoded as vectors
\[{\bf t}=(t_2, t_3, t_5,...) = (t_p)_{p\in\mathcal{P}}\quad\text{where $t_p\in\mathbb{Q}$}\]
and a finite number of reductions $t_p\bmod p$ may remain undefined when $t_p$ is not $p$-integral. 
One then identifies ${\bf t}={\bf s}$ when $t_p\equiv s_p\bmod p$ for all sufficiently large primes $p\in\cP$. Going further, Rosen (see \cite{Ro20} and \cite{RTTY24}) has developed a notion of algebraic numbers in $\mathcal{A}$, which are essentially ${\bf t}=(t_p)_{p\in\mathcal{P}}$ arising from sequences of rational numbers $t_n$ satisfying a linear recurrence with constant rational coefficients. 
Alongside many aspects concerning finite multiple zeta values, in \cite{KZ} Kaneko and Zagier discuss several other arithmetically interesting special elements in $\cA$, including Fermat quotients modulo $p$ as the prime $p$ varies. 
For prime $p$, write $\mathbb{Z}_{(p)}$ for the localisation of $\mathbb{Z}$ at the prime ideal $(p)$. 
Given $\alpha\in\mathbb{Z}_{(p)}^\times$, the {\it Fermat quotient} is
\[q_p(\alpha)=\frac{\alpha^{p-1}-1}{p}\in\mathbb{Z}_{(p)}\]
and we have a corresponding function
\begin{align*}
\logA:\mathbb{Q}^\times & \longrightarrow\cA \\
\alpha& \longmapsto \left(q_p(\alpha)\bmod p\right)_{p\in\cP}.
\end{align*}
It is easy to check $\log_\cA(\alpha\beta)=\log_\cA(\alpha)+\log_\cA(\beta)$, so this provides a natural analogue of the logarithm function with values in $\cA$. On investigating irrationality and transcendence in $\cA$, Luca and Zudilin \cite{LZ25} suggested an interesting step would be to prove $\log_\cA(2)$ is irrational. This was answered by Matsusaka and Seki \cite{MS26}, who went further, showing that $\log_\cA(\alpha)$ cannot take non-zero rational values in $\cA$ for $\alpha\in\mathbb{Q}^\times\setminus\{\pm 1\}$. Furthermore, an earlier result of Silverman \cite{Si88} on non-Wieferich primes shows that $\log_\cA(\alpha)$ cannot be zero, provided the $abc$-conjecture holds.
\begin{theorem}[Matsusaka--Seki, Silverman]\label{MSS}
For any $\alpha\in\mathbb{Q}^\times\setminus\{\pm 1\}$, we have 
\begin{itemize}
\item[(i)]
$\log_\cA(\alpha)\in\cA\setminus\mathbb{Q}^{\times}$, 
\item[(ii)]
$\log_\cA(\alpha)\neq 0\in\cA$ assuming the $abc$-conjecture.
\end{itemize}
\end{theorem}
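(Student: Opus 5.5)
Part (i) claims $\log_\cA(\alpha)$ is never a non-zero rational in $\cA$. Suppose instead that $\log_\cA(\alpha)=r=a/b$ with $a\neq 0$. Then $b\,q_p(\alpha)\equiv a \pmod p$ for all large $p$. We may replace $\alpha$ by $\alpha^2$, since this doubles the logarithm. So we may assume $\alpha=u/v>0$, $\alpha\ne1$, with $u,v$ coprime positive integers.

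The identity to exploit is the Fermat quotient for the exponent $p(p-1)$:
\[
q_p(\alpha)\equiv\frac{\alpha^{p(p-1)}-1}{p^2}\pmod p.
\]
This holds because $\alpha^{p(p-1)}=(1+pq)^p\equiv1+p^2q \pmod{p^3}$. Equivalently, $q_p(\alpha)\equiv -\tfrac1p\cdot\tfrac{\alpha^{p}-\alpha}{\alpha}\cdot(\ldots)$. The cleaner route, following Matsusaka--Seki, is to read $q_p$ off the $p$-adic expansion of the integer $u^{p-1}-v^{p-1}$.

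I would argue by size. Set $N_p=u^{p-1}-v^{p-1}$. The hypothesis says
\[
b N_p - a p\, v^{p-1}\equiv 0 \pmod{p^2},
\]
that is, $b u^{p-1}\equiv (b+ap)v^{p-1}\pmod{p^2}$. Replacing $p$ by the exponent $p-1$ alone gives no contradiction, so the idea is to vary the exponent. Since $\alpha^{p-1}\equiv \alpha^{k(p-1)}$ relations hold modulo $p^2$ with $q_p(\alpha^k)=kq_p(\alpha)$, I would instead use the standard identity
\[
q_p(\alpha)\equiv \sum_{j=1}^{p-1}\frac{\text{(digit terms)}}{j}\pmod p,
\]
namely Lerch's formula for integer $\alpha=u$:
\[
q_p(u)\equiv\sum_{j=1}^{p-1}\frac{1}{j}\left\lfloor \frac{ju}{p}\right\rfloor \pmod p.
\]
This turns the condition $q_p(u)-q_p(v)\equiv a/b$ into a congruence between explicit harmonic-type sums. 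The goal is then a density argument: express the sums as $p\cdot(\text{integral}) + O(\cdot)$, or use the Eisenstein formula in terms of $\sum_{j<p/u}1/j$-type partial harmonic sums $H_{\lfloor p/u\rfloor}$. I would show these cannot be congruent to a fixed rational for all large $p$, using Dirichlet's theorem on primes in progressions modulo $u v$ to pick residues where the partial sums differ by a controlled amount. This forces $a=0$.

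The main obstacle is exactly this step: extracting a contradiction from congruences modulo $p$, which carry no archimedean size information. What I would try first is the Matsusaka--Seki device of lifting to an exact equation. Namely, I would find a polynomial expression $F(p)$ in $\mathbb{Z}[p]$ with $F(p)\equiv b\,q_p - a\pmod p$ that is non-zero and of size $o(p)$ for suitable $p$. Then divisibility by $p$ forces $F(p)=0$, which we then contradict.

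Part (ii) is Silverman's theorem. Under $abc$, there are $\gg\log X$ primes $p\le X$ with $\alpha^{p-1}\not\equiv1\pmod{p^2}$, i.e.\ $q_p(\alpha)\not\equiv0$. The argument takes the factorisation $\alpha^n-1=u_nw_n$ with $u_n$ squarefree and $w_n$ powerful, and uses $abc$ to show that $u_n$ is large. Primitive prime divisors $p$ of $\Phi_n(\alpha)$ in $u_n$ satisfy $p\equiv1\pmod n$, and they are non-Wieferich. Hence infinitely many $p$ have $q_p(\alpha)\not\equiv0$, so $\log_\cA(\alpha)\ne0$. I would simply cite \cite{Si88} here.
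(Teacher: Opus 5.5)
Your part (ii) is acceptable as a citation of Silverman, and the mechanism you describe is the right one: a prime $p\nmid n$ dividing $\Phi_n(u,v)$ and dividing $u^n-v^n$ only to the first power has $p\equiv 1\bmod n$ and $q_p(\alpha)\not\equiv 0$. One precision: $abc$ has to show that $\Phi_n(u,v)\gg u^{\varphi(n)}$ cannot be absorbed by the powerful part $B_n\ll u^{\epsilon n}$. Knowing only that $A_n$ is large does not put a prime of $A_n$ into $\Phi_n(u,v)$.

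Part (i), however, is not proved. Each route you list stops exactly at what you call the main obstacle. Lerch's formula only rewrites the hypothesis as a congruence modulo $p$; it should read $q_p(u)\equiv u^{-1}\sum_{j=1}^{p-1}j^{-1}\lfloor ju/p\rfloor$. Choosing residue classes of $p$ by Dirichlet's theorem produces no archimedean inequality to contradict. The device you attribute to Matsusaka--Seki does not exist as stated. $q_p(\alpha)\bmod p$ is not the reduction of a fixed polynomial in $p$, and with a single prime modulus there is no representative of $bq_p(\alpha)-a$ that is provably $o(p)$. What they actually use, as the paper recalls, are the values $\Phi_\ell(\alpha)$ at large primes $\ell$. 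Also, $\alpha^{k(p-1)}\equiv 1+kp\,q_p(\alpha)\pmod{p^2}$, not $\equiv\alpha^{p-1}$.

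The missing idea is to fix a large auxiliary prime $\ell$ and use all prime divisors of $W_\ell=\Phi_\ell(u,v)=(u^\ell-v^\ell)/(u-v)$ at once, so the modulus becomes $W_\ell$ rather than a single $p$.
\begin{itemize}
\item Every $p\mid W_\ell$ has $p\equiv 1\bmod\ell$, so $p>\ell$ and the hypothesis applies.
\item Expanding $\Phi_\ell(u^p,v^p)=\Phi_\ell(u,v)\Phi_{\ell p}(u,v)\equiv 0\pmod{p^2}$ to first order gives $W_\ell/p\equiv q_p(\alpha)\,v\,\Phi_{\ell,Y}(u,v)\not\equiv 0\pmod p$, since $q_p(\alpha)\equiv a/b\not\equiv 0$. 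Hence $W_\ell$ is square-free.
\item Differentiating $X^\ell-Y^\ell=(X-Y)\Phi_\ell(X,Y)$ in $Y$ turns this into $(u-v)W_\ell/p\equiv-\ell v^\ell q_p(\alpha)\pmod p$.
\end{itemize}
Consequently the single integer
\[T_\ell=b(u-v)\sum_{p\mid W_\ell}\frac{W_\ell}{p}+a\ell v^\ell\]
is divisible by every $p\mid W_\ell$, hence by $W_\ell$.

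Now sizes enter. The $r_\ell<\ell\log(u+v)/\log\ell$ primes dividing $W_\ell$ all exceed $\ell$, so
\[\left|\frac{T_\ell}{W_\ell}\right|\le b(u-v)\sum_{p\mid W_\ell}\frac{1}{p}+\frac{|a|\,\ell v^\ell(u-v)}{u^\ell-v^\ell}\longrightarrow 0 .\]
On the other hand, $W_\ell\equiv 1$ and $W_\ell/p\equiv 1\pmod\ell$ give $T_\ell\equiv b(u-v)r_\ell\not\equiv 0\pmod\ell$ for large $\ell$, since $1\le r_\ell<\ell$. A non-zero integer tending to $0$ gives the contradiction. This is the paper's proof of Theorem~\ref{logAm}(i) specialised to $m=1$, where $\cA(1)=\cA$; the paper recovers the cited statement this way rather than reproving it separately.
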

In \cite{GRM13}, Graves and Ram Murty extended Silverman's method to prove Theorem \ref{MSS}(ii) for integer $\alpha>1$ when primes are restricted to congruence classes 
\[\cP(m)=\{p\in\cP\;|\; p\equiv 1\bmod m\}\]
for fixed integer $m\geq 1$. In this note, we will extend this to rational $\alpha$ and give a similar extension to Theorem \ref{MSS}(i) for these congruence classes. 
To state this, define a corresponding restricted poor man's ad\`{e}le ring
\[\cA(m):=\left.\left(\;\prod_{p\in\cP(m)}\mathbb{Z}/p\mathbb{Z}\right) \middle/ \left(\;\bigoplus_{p\in\cP(m)}\mathbb{Z}/p\mathbb{Z}\right)\right..\]
The set $\cP(m)$ is infinite with density $1/\varphi(m)$ in $\cP$ by Chebotarev's density theorem, where $\varphi(m)$ denotes Euler's totient function. 
Again, the field of rational numbers $\mathbb{Q}$ embeds in $\cA(m)$ naturally, and we consider the corresponding logarithm 
\[\log_{\cA(m)}(\alpha)=(q_p(\alpha)\bmod p)_{p\in\cP(m)}.\]
\begin{theorem}\label{logAm}
For any $\alpha\in\mathbb{Q}^\times\setminus\{\pm 1\}$ and $m\geq 1$, we have
\begin{itemize}
\item[(i)]
$\log_{\cA(m)}(\alpha)\in\cA(m)\setminus\mathbb{Q}^{\times}$, 
\item[(ii)]
$\log_{\cA(m)}(\alpha)\neq 0\in\cA(m)$ assuming the $abc$-conjecture.
\end{itemize}
\end{theorem}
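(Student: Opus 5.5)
The plan is to reduce everything to primes that are primitive divisors of cyclotomic values, since these land automatically in $\cP(m)$. Write $\alpha=s/t$ with $\gcd(s,t)=1$, $|s|\neq|t|$, and for $n\geq1$ let $\Phi_n(s,t)=t^{\varphi(n)}\Phi_n(s/t)$ be the homogenised cyclotomic value.

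\emph{Step 1: the basic congruence.} Take $n$ a multiple of $m$, and let $p\nmid n$ be a primitive prime divisor of $s^n-t^n$. Then the order of $\alpha$ modulo $p$ is exactly $n$. Hence $p\equiv1\bmod n$, so $p\in\cP(m)$, and $p\mid\Phi_n(s,t)$. Write $\alpha^n=1+pu$ with $u\in\mathbb{Z}_{(p)}$. The binomial theorem gives
\[
\alpha^{p-1}=(1+pu)^{(p-1)/n}\equiv 1+\tfrac{p-1}{n}\,pu\equiv 1-\tfrac{pu}{n}\pmod{p^2},
\]
so that
\[
q_p(\alpha)\equiv-\frac{\alpha^n-1}{np}\pmod p .
\]
Zsigmondy's theorem (or Bang's, for rational bases) provides such primes for every sufficiently large $n$. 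Taking $n=mk$ with $k\to\infty$ therefore gives infinitely many $p\in\cP(m)$ at which $q_p(\alpha)$ is controlled by $\Phi_n(\alpha)$.

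\emph{Step 2: part (ii).} This is the Silverman and Graves--Ram Murty argument carried out for $\Phi_{n}(s,t)$. By Step 1, $q_p(\alpha)\not\equiv0$ exactly when $p^2\nmid s^n-t^n$, equivalently $p^2\nmid\Phi_n(s,t)$. Apply the $abc$-conjecture to $s^n+(-t^n)=(s^n-t^n)$ for $n$ ranging over multiples of $m$. This shows that the powerful part of $s^n-t^n$ is $O_\epsilon(\max(|s|,|t|)^{\epsilon n})$. On the other hand, $|\Phi_n(s,t)|\gg\max(|s|,|t|)^{\varphi(n)}$ and $\varphi(n)\gg n/\log\log n$, while the non-primitive part of $\Phi_n(s,t)$ divides $n$. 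So for large $n\equiv 0\bmod m$, the number $\Phi_n(s,t)$ has a primitive prime factor $p$ with $p\,\|\,\Phi_n(s,t)$. Distinct $n$ give distinct such $p$, which yields infinitely many $p\in\cP(m)$ with $q_p(\alpha)\not\equiv0$.

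\emph{Step 3: part (i).} Suppose $q_p(\alpha)\equiv a/b\neq0$ for all large $p\in\cP(m)$. By Step 1, every primitive prime $p$ of $\Phi_n(s,t)$ with $m\mid n$ satisfies
\[
b(s^n-t^n)+anp\,t^n\equiv0\pmod{p^2}.
\]
In particular $p\,\|\,\Phi_n(s,t)$. To turn this into a contradiction I would use the logarithmic derivative, as the abstract suggests. Compare $n$ with $\ell n$ for an auxiliary prime $\ell\nmid n$, or differentiate the identity $s^n-t^n=\prod_{d\mid n}\Phi_d(s,t)$. Writing $N_n$ for the primitive part of $\Phi_n(s,t)$, the aim is to combine the congruences over all $p\mid N_n$ by the Chinese remainder theorem. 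This should give
\[
b\,\frac{\Phi_n(s,t)}{N_n}\cdot\frac{s^n-t^n}{\Phi_n(s,t)}\;\equiv\;-an\,t^n\cdot(\text{explicit unit})\pmod{N_n}.
\]
Both sides should be integers of size well below $N_n$, and their difference is nonzero because $a\neq0$ and the sizes or signs differ. That contradicts divisibility by $N_n$ once $n$ is large.

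The main obstacle is Step 3: making this size comparison work uniformly. In particular, the non-primitive part of $\Phi_n$ and the factor $t^n$ must not swamp $N_n$. My first attempt would be to take $n=m\ell$ with $\ell$ a large prime. Then $\Phi_{m\ell}(s,t)$ has a very simple structure, namely $\Phi_m(s^\ell,t^\ell)/\Phi_m(s,t)$, which keeps all the auxiliary quantities explicit and of size at most polynomial in $\ell$ relative to $N_n$.
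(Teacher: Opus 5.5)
Your part (ii) is the paper's argument: Silverman's $abc$ bound on the powerful part of $s^n-t^n$, applied to cyclotomic values at multiples of $m$. There is one slip. For fixed $\epsilon$, $\varphi(n)\gg n/\log\log n$ does not beat $\epsilon n$, so you must restrict to $n$ with $\varphi(n)/n$ bounded below, e.g.\ $n=m\ell$ with $\ell$ prime, as the paper does.

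Part (i), however, has a genuine gap, exactly where $m\ge 2$ differs from the case $m=1$. Gluing your congruences $b(s^n-t^n)/p\equiv -ant^n\pmod p$ over $p\mid N_n$ forces the factor $S=\sum_{p\mid N_n}N_n/p$. Your ``explicit unit'' is $S^{-1}\bmod N_n$, which is not small. What you actually get is $N_n\mid bCES+ant^n$, where $C=(s^n-t^n)/\Phi_n(s,t)$ and $E=\Phi_n(s,t)/N_n$. A contradiction then needs both $|bCE|\sum_{p\mid N_n}1/p\to 0$ and $n|t|^n/N_n\to 0$.

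The available bound on $\sum_{p\mid N_n}1/p$ is only $O(1/\log n)$ (Lemma~\ref{limlem}), so $C$ must stay essentially bounded.
\begin{itemize}
\item For $m=1$ and $n=\ell$ it does ($C=s-t$), and your route works.
\item For $m\ge 2$ and $n=m\ell$, however, $C=\Phi_m(s,t)\prod_{d\mid m,\,d<m}\Phi_d(s^\ell,t^\ell)$, which grows like $\max(|s|,|t|)^{\ell(m-\varphi(m))}$. Your hope that the auxiliary quantities are ``polynomial in $\ell$ relative to $N_n$'' is false, and even polynomial growth could not be absorbed.
\item The term $nt^n$ has the same defect. It has degree $m\ell$ against roughly $\ell\varphi(m)$ for $N_{m\ell}$, and for $s=3$, $t=2$, $m=2$ the ratio $n|t|^n/N_n$ tends to infinity.
\end{itemize}

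The missing idea is the logarithmic-derivative correction: with $\beta=\alpha^\ell$, work with $\Phi_m(\beta)$ rather than $\beta^m-1$. Modulo $p$, the large cofactor $\prod_{d\mid m,\,d<m}\Phi_d(\beta)$ equals $m\beta^{-1}/\Phi_m'(\beta)$, and $\Phi_m'(\beta)/\Phi_m(\beta)$ is bounded.

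In the paper this is Lemma~\ref{conglem}, namely $\Phi_n(u,v)/p\equiv q_p(\alpha)v\Phi_{n,Y}(u,v)\pmod p$, combined with the $Y$-derivative of $\Phi_m(X^\ell,Y^\ell)=\Phi_m(X,Y)\Phi_{m\ell}(X,Y)$. Together they give $wW_\ell/p\equiv q_p(\alpha)R_\ell\pmod p$. Here $w=\Phi_m(u,v)$ is fixed, and $R_\ell=\ell v^\ell\Phi_{m,Y}(u^\ell,v^\ell)$ satisfies $|R_\ell/W_\ell|\ll\ell\alpha^{-\ell}$. Then $T_\ell=bwS_\ell-aR_\ell$ is divisible by the square-free $W_\ell$, and $T_\ell/W_\ell\to 0$.

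Your nonvanishing step (``sizes or signs differ'') is also unsupported. The paper argues modulo $\ell$:
\begin{itemize}
\item every $p\mid W_\ell$ is $\equiv 1\bmod\ell$, so $W_\ell/p\equiv 1\pmod\ell$;
\item $\ell\mid R_\ell$ by definition;
\item hence $T_\ell\equiv bw\,r\pmod\ell$, where $r$ is the number of prime factors of $W_\ell>1$.
\end{itemize}
This is nonzero modulo $\ell$ because $1\le r<\ell$ (from $\ell>(u+v)^{\varphi(m)}$) and $\ell>b,w$.
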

Theorem \ref{MSS} can be rephrased in the following way: for fixed $\alpha\in\mathbb{Q}^\times\setminus\{\pm 1\}$ and $c\in\mathbb{Q}$, there are infinitely many primes $p$ such that 
\[q_p(\alpha)=\frac{\alpha^{p-1}-1}{p}\not\equiv c\bmod p\]
where the case $c=0$ is dependent on the $abc$-conjecture. Theorem \ref{logAm} says there are infinitely many primes satisfying this condition in arithmetic progressions of the form $p\equiv 1\bmod m$. 
The methods in both \cite{MS26} and \cite{Si88} revolve around finding suitable congruences and bounds for values of cyclotomic polynomials $\Phi_\ell(\alpha)$ with sufficiently large primes $\ell$, assuming $\log_\cA(\alpha)$ is rational. We modify this by considering $\Phi_{m\ell}(\alpha)$, together with a suitable logarithmic derivative correction.
\smallskip

The machinery used to prove Theorem \ref{logAm} can be adapted to give further results about $\log_\cA(\alpha)$ in $\cA$. 
Rather than excluding a single non-zero rational value $q_p(\alpha)=c$, a signed version of the cyclotomic argument allows us to exclude the possibility that $q_p(\alpha)=\pm c$ simultaneously for $c\in\mathbb{Q}^\times$. 
In particular, we obtain a second strengthening of Theorem \ref{MSS}.
\begin{theorem}\label{logsquare}
For any $\alpha\in\mathbb{Q}^\times\setminus\{\pm 1\}$, we have 
\begin{itemize}
\item[(i)]
$\log_\cA(\alpha)^2\in\cA\setminus\mathbb{Q}^{\times}$, 
\item[(ii)]
$\log_\cA(\alpha)^2\neq 0\in\cA$ assuming the $abc$-conjecture.
\end{itemize}
\end{theorem}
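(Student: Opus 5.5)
\emph{Proposed approach.} The plan is to reduce both parts to statements about $\log_\cA(\alpha)$ itself, and to handle the remaining case with a sign-insensitive version of the cyclotomic argument.

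\emph{Part (ii).} This should be immediate. Each $\mathbb{Z}/p\mathbb{Z}$ is a field, so $q_p(\alpha)^2\equiv 0\bmod p$ forces $q_p(\alpha)\equiv 0\bmod p$. Hence $\log_\cA(\alpha)^2=0$ in $\cA$ implies $\log_\cA(\alpha)=0$, contradicting Theorem \ref{MSS}(ii) under the $abc$-conjecture.

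\emph{Part (i), reduction to a square.} Suppose $\log_\cA(\alpha)^2=r\in\mathbb{Q}^\times$, so that $q_p(\alpha)^2\equiv r\bmod p$ for all large $p$. Then $r$ is a quadratic residue modulo every sufficiently large prime. By quadratic reciprocity (or Chebotarev applied to $\mathbb{Q}(\sqrt r)$), a rational number that is not a square is a non-residue for infinitely many primes. So $r=c^2$ with $c\in\mathbb{Q}^\times$. It then follows that for every large $p$ there is a sign $\varepsilon_p\in\{\pm1\}$ with $q_p(\alpha)\equiv \varepsilon_p c\bmod p$. If $\varepsilon_p$ were eventually constant we would contradict Theorem \ref{MSS}(i), but in general the signs may vary with $p$. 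The key point is that the argument must be run on the squared congruence, which does not see the sign.

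\emph{Part (i), the signed cyclotomic argument.} Write $\alpha=a/b$ in lowest terms and take a large prime $\ell$. Let $p\neq \ell$ be a prime dividing $\Phi_\ell(a,b)$, the homogenised cyclotomic value. Then $\alpha^\ell\equiv 1\bmod p$ and $p\equiv 1\bmod \ell$. Writing $\alpha^{p-1}=(\alpha^\ell)^{(p-1)/\ell}$ and expanding $\alpha^\ell=1+pt$, one gets
\[
q_p(\alpha)\equiv -\tfrac{1}{\ell}\,\frac{\alpha^\ell-1}{p}\pmod p .
\]
Squaring and using $q_p(\alpha)^2\equiv c^2$ gives
\[
\bigl(b^\ell(\alpha^\ell-1)\bigr)^2\equiv \ell^2c^2p^2b^{2\ell}\pmod{p^3}.
\]
This is a congruence that no longer involves $\varepsilon_p$. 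Equivalently, $p$ divides exactly one of the two factors $(a^\ell-b^\ell)\mp \ell c\,p\,b^\ell$ to high order, or more cleanly $v_p(a^\ell-b^\ell)=1$ together with a determined residue of $(a^\ell-b^\ell)/p$ up to sign. I would then run the logarithmic-derivative correction used for Theorem \ref{logAm}, pairing $\Phi_\ell$ with $\Phi_\ell'/\Phi_\ell$, so that these local conditions assemble over all $p\mid\Phi_\ell(a,b)$. The aim is a global identity in which the integer $\Phi_\ell(a,b)$ is constrained by a quantity of size $O(\ell^{O(1)}\cdot\text{const}^{\ell})$ with a constant strictly smaller than $\max(|a|,|b|)^{1-\delta}$. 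This contradicts $|\Phi_\ell(a,b)|\asymp\max(|a|,|b|)^{\ell-1}$ as $\ell\to\infty$.

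\emph{Main obstacle.} The hard step is the globalisation. The signs $\varepsilon_p$ vary over the primes dividing $\Phi_\ell(a,b)$, so one cannot simply write the conclusion as a single congruence modulo $\rad(\Phi_\ell(a,b))^2$. My first attempt would be to split $\rad\Phi_\ell(a,b)=R_+R_-$ according to the sign. Applying the Matsusaka--Seki style congruence modulo $R_+^2$ and $R_-^2$ separately, and combining by CRT, yields a product $\bigl((a^\ell-b^\ell)/R - \ell c R\,b^\ell u\bigr)\bigl(\dots+\dots\bigr)$, which is divisible by a large power of the radical. Because this product is symmetric in the sign, it can be bounded without knowing the partition. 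That is the ``signed'' version of the argument, and the size comparison should then go through exactly as in Theorem \ref{MSS}(i).
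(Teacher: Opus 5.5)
Your part (ii) and your reduction of part (i) to $q_p(\alpha)\equiv\varepsilon_p\,a/b \bmod p$ agree with the paper. Below I use the paper's notation: $\alpha=u/v$, and $w$, $W_\ell=\Phi_{m\ell}(u,v)$ (square-free) and $R_\ell$ as in the proof of Theorem~\ref{logAm}(i); your set-up is the case $m=1$.

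The gap is that your symmetric product does not have the divisibility you claim. The hypothesis only gives first-order information: $bw\,W_\ell/p\equiv\varepsilon_p aR_\ell \bmod p$ for each $p\mid W_\ell$. With $S_\ell=\sum_{p\mid W_\ell}W_\ell/p$ this yields $p\mid bwS_\ell-\varepsilon_p aR_\ell$. On the other hand, $bwS_\ell+\varepsilon_p aR_\ell\equiv 2\varepsilon_p aR_\ell\not\equiv 0\bmod p$. So $(bwS_\ell)^2-(aR_\ell)^2$ is only guaranteed to be divisible by $W_\ell$ itself, not by a large power of the radical. Its size, however, is $W_\ell^2\cdot o(1)$. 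The integer quotient is therefore only bounded by $W_\ell\cdot o(1)$, which does not tend to zero, and no contradiction results. Any sign-free squared congruence, such as yours modulo $p^3$, suffers the same loss: squaring doubles the size without increasing the divisibility the hypothesis can supply. Your final size comparison with a bound of shape $\ell^{O(1)}C^\ell$ is not supported by anything in the argument.

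You have also put the difficulty in the wrong place. The varying signs do not obstruct globalisation. The signed linear form $T^\varepsilon_\ell=bwS^\varepsilon_\ell-aR_\ell$, with $S^\varepsilon_\ell=\sum_{p\mid W_\ell}\varepsilon_pW_\ell/p$, is divisible by every $p\mid W_\ell$. Moreover $|S^\varepsilon_\ell|\le S_\ell$ gives $T^\varepsilon_\ell/W_\ell\to 0$ without knowing the partition. What your proposal never addresses is the non-vanishing of this integer. Modulo $\ell$ one only gets $T^\varepsilon_\ell\equiv bwE_\ell$ with $E_\ell=\sum_{p\mid W_\ell}\varepsilon_p$, and $E_\ell$ may be $0$.

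The missing idea is the paper's auxiliary odd prime $m\nmid av(u-v)$, with $\ell$ running over primes $\ell\equiv 1\bmod m(m-1)$. There are two cases.
\begin{itemize}
\item If $E_\ell\neq 0$, then $0<|E_\ell|\le r_{m\ell}<\ell$ gives $T^\varepsilon_\ell\not\equiv 0\bmod \ell$.
\item If $E_\ell=0$, then $W_\ell/p\equiv 1\bmod m$ gives $S^\varepsilon_\ell\equiv 0\bmod m$. Meanwhile $\Phi_{m,Y}(X,Y)\equiv (X-Y)^{m-2}\bmod m$, together with $u^\ell\equiv u$ and $v^\ell\equiv v \bmod m$, gives $R_\ell\equiv v(u-v)^{m-2}\not\equiv 0\bmod m$. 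Hence $T^\varepsilon_\ell\equiv -aR_\ell\not\equiv 0\bmod m$.
\end{itemize}
So the two routes fail in complementary ways. Your symmetric product is easily non-zero, since it is $\equiv (bwr_{m\ell})^2\not\equiv 0\bmod \ell$, but it is too large. The signed linear form is small, but it needs the second modulus $m$ to exclude vanishing.
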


The restricted congruence class result also has an application concerning $\log_\cA(\alpha)$ in Rosen's theory of finite algebraic numbers. Following a suggestion by Luca and Zudilin in \cite{LZ25}, we define the recurrence degree $\deg_\cA({\bf t})$ of ${\bf t}\in\cA$ to be the least order of a rational constant-coefficient recurrence whose $p$-th terms represent ${\bf t}$. We obtain the following consequence of Theorem \ref{logAm}.
\begin{theorem}\label{degtwo}
Let $\alpha\in\mathbb{Q}^\times\setminus\{\pm 1\}$. If $\deg_\cA(\log_\cA(\alpha))\leq 2$, 
then there exists $r\in\mathbb{Q}$ and a square-free integer $D\neq 1$ such that
\[\log_\cA(\alpha)=r\left(1-\leg{D}{p}\right)_p.\]
Furthermore, if the $abc$-conjecture holds, then $\deg_\cA(\log_\cA(\alpha))>2$. 
\end{theorem}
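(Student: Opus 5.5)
Suppose $\deg_\cA(\log_\cA(\alpha))\leq 2$. Then there are rationals $t_n$ satisfying a recurrence $t_{n+2}=at_{n+1}+bt_n$ with $a,b\in\mathbb{Q}$ (degenerate lower-order cases included) and $q_p(\alpha)\equiv t_p\bmod p$ for all large $p$. The plan is to write $t_n$ by Binet's formula and reduce the $p$-th term modulo $p$ using Frobenius, so that ${\bf t}$ becomes a simple expression involving a Legendre symbol. Then Theorem~\ref{logAm} on suitable residue classes pins down the constants.

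First, let $\lambda_1,\lambda_2$ be the roots of $x^2-ax-b$. If they are distinct, then $t_n=c_1\lambda_1^n+c_2\lambda_2^n$ with $c_i\in\mathbb{Q}(\sqrt{D})$, where $D$ is the square-free part of the discriminant. If there is a repeated root $\lambda\in\mathbb{Q}$, then $t_n=(c_1+c_2n)\lambda^n$.

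\begin{itemize}
\item In the repeated-root case, $t_p\equiv c_1\lambda^p\equiv c_1\lambda\bmod p$ by Fermat's little theorem, so ${\bf t}$ is the rational $c_1\lambda$. By Theorem~\ref{MSS}(i) this rational must be $0$, which is the case $r=0$.
\item If $D=1$, the same Fermat argument gives ${\bf t}=c_1\lambda_1+c_2\lambda_2\in\mathbb{Q}$, with the same conclusion.
\item If $D\neq1$, then for $p$ large, unramified and coprime to all denominators, Frobenius gives $\lambda_i^p\equiv\lambda_i$ when $\leg{D}{p}=1$ and $\lambda_1^p\equiv\lambda_2$ when $\leg{D}{p}=-1$. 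The congruences hold in $\mathcal{O}_{\mathbb{Q}(\sqrt D)}/p$.
\end{itemize}

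In the case $D\neq1$, this gives $t_p\equiv u$ for split $p$ and $t_p\equiv v$ for inert $p$, where $u=c_1\lambda_1+c_2\lambda_2$ and $v=c_1\lambda_2+c_2\lambda_1$. Since $t_p$ is rational and $\leg{D}{p}=\pm1$, both $u$ and $v$ lie in $\mathbb{Q}$. We have to check this carefully: $t_n\in\mathbb{Q}$ for all $n$ forces $c_2=\bar c_1$, so $u=\operatorname{Tr}(c_1\lambda_1)\in\mathbb{Q}$ and $v=\operatorname{Tr}(c_1\lambda_2)\in\mathbb{Q}$. Hence
\[q_p(\alpha)\equiv \tfrac{u+v}{2}+\tfrac{u-v}{2}\leg{D}{p}\bmod p.\]

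The key step is to show $u=0$. Take $m=4|D|$. Every $p\equiv1\bmod m$ splits in $\mathbb{Q}(\sqrt D)$, since $\leg{D}{p}=1$ by quadratic reciprocity. So $\log_{\cA(m)}(\alpha)=u\in\mathbb{Q}$ in $\cA(m)$, and Theorem~\ref{logAm}(i) forces $u=0$. Putting $r=v/2$ gives $\log_\cA(\alpha)=r(1-\leg{D}{p})_p$, as required.

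For the conditional part, assume $abc$ and suppose the degree is at most $2$. In the repeated-root and $D=1$ cases, ${\bf t}=0$ contradicts Theorem~\ref{MSS}(ii). In the case $D\neq1$, Theorem~\ref{logAm}(ii) gives no contradiction directly, because $q_p(\alpha)\equiv0$ on split primes is exactly what the formula predicts. So I would apply the same reasoning with a different modulus, aiming to show $q_p(\alpha)\not\equiv0$ for infinitely many split primes. The restriction $p\equiv1\bmod m$ with $4D\mid m$ captures all the primes we need, and Theorem~\ref{logAm}(ii) gives infinitely many $p\equiv1\bmod m$ with $q_p(\alpha)\not\equiv0$. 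This contradicts $u=0$, so no recurrence of order at most $2$ exists and $\deg_\cA(\log_\cA(\alpha))>2$.

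The step needing most care is the Frobenius reduction. We must justify working modulo primes above $p$, handle denominators and ramified primes, and confirm that the degenerate recurrences of order $0$ or $1$ fall into the rational case.
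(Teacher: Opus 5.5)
Your proof is correct and follows essentially the paper's route: the Binet/Frobenius reduction you carry out is the forward direction of Proposition~\ref{quadinA}, which the paper proves separately and then cites. Restricting to $p\equiv 1\bmod 4|D|$ (a multiple of the conductor $N$ the paper uses) and invoking Theorem~\ref{logAm}(i) and (ii) is exactly the paper's argument. Your remark that Theorem~\ref{logAm}(ii) ``gives no contradiction directly'' is mistaken, as is the suggestion of needing a different modulus: once $u=0$, the formula says $\log_{\cA(4|D|)}(\alpha)=0$, and this contradicts Theorem~\ref{logAm}(ii) immediately with the same modulus, which is what your closing sentences, and the paper, actually do.
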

Section 2 contains necessary cyclotomic constructions and we give the proofs of Theorem \ref{logAm} and Theorem \ref{logsquare} in Section 3. In Section 4, we introduce the recurrence degree, classify elements of degree at most 2, and use this to prove Theorem \ref{degtwo}.

\section{Cyclotomic preliminaries}
For integer $n\geq 1$, let $\zeta_n$ be a primitive $n$th root of unity and $\Phi_n(T)$ be the $n$th cyclotomic polynomial with degree $\varphi(n)$
\begin{equation*}
\Phi_n(T)=\prod_{\substack{1\leq j<n \\ \gcd(j,n)=1}} \left(T-\zeta_n^j\right)\in\mathbb{Z}[T].
\end{equation*}
We will also make use of the homogenised form
\begin{equation}\label{homPhi}
\Phi_{n}(X,Y)=Y^{\varphi(n)}\Phi_n(X/Y)=
\prod_{\substack{1\leq j<n \\ \gcd(j,n)=1}} \left(X-\zeta_n^jY\right)\in\mathbb{Z}[X,Y]
\end{equation}
as well as its partial derivatives $\Phi_{n,X}(X,Y)$ and $\Phi_{n,Y}(X,Y)$. By Euler's homogeneous form identity, these satisfy
\begin{equation}\label{euler}
X\Phi_{n,X}(X,Y)+Y\Phi_{n,Y}(X,Y)=\varphi(n)\Phi_n(X,Y).
\end{equation}
Given $n\geq 1$ and a prime $p\nmid n$, the well-known identity $\Phi_n(T^p)=\Phi_n(T)\Phi_{np}(T)$ has homogenised version
\begin{equation}\label{Phinp}
\Phi_n(X^p,Y^p)=\Phi_n(X,Y)\Phi_{np}(X,Y). \\[10pt]
\end{equation}
To investigate $q_p(\alpha)$ for $\alpha=u/v\in\mathbb{Q}^\times\setminus\{\pm 1\}$, since 
\[q_p(\alpha)\equiv q_p(-\alpha)\equiv -q_p(\alpha^{-1})\bmod p\]
for $p>2$, we need only consider $u>v\geq 1$ with $\gcd(u,v)=1$, and will assume this from here onwards. 
We first collect some standard facts regarding prime factors of $\Phi_n(u,v)$. 
\begin{lemma}\label{ordlem}
Given prime $p$ dividing $\Phi_n(u,v)$, we have the following.
\begin{itemize}
\item[(i)]
$p\nmid uv$,
\item[(ii)]
if $p\nmid n$, then $\ord_p(\alpha)=n$ and $p\equiv 1\mod n$,
\item[(iii)]
if $p\mid n$, then $\ord_p(\alpha)=n/p^k$ for some $k\geq 1$ and $p$ is the largest prime factor of $n$.
\end{itemize}
\end{lemma}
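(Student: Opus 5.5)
We work throughout with $u>v\geq 1$ coprime and $\alpha=u/v$, and use the factorisation
\[u^n-v^n=\prod_{d\mid n}\Phi_d(u,v),\]
which follows from the homogenised form \eqref{homPhi}. The three parts are proved in order, with (iii) the only one needing a real computation.

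For (i), suppose $p\mid u$. Then $\Phi_n(u,v)\equiv \Phi_n(0,v)=\pm v^{\varphi(n)}\bmod p$; indeed the constant coefficient of $\Phi_n$ is $\pm1$, equal to $1$ for $n\geq 2$. This is nonzero mod $p$ since $\gcd(u,v)=1$. Symmetrically, $\Phi_n$ is monic, so if $p\mid v$ then $\Phi_n(u,v)\equiv u^{\varphi(n)}\not\equiv 0$. Hence $p\nmid uv$, and $\alpha$ is a unit mod $p$, so $\ord_p(\alpha)$ makes sense.

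For (ii), since $\Phi_n(u,v)\mid u^n-v^n$, we get $\alpha^n\equiv1$, so $d:=\ord_p(\alpha)$ divides $n$. Suppose $d<n$. Then $p$ divides both $\Phi_n(u,v)$ and $u^d-v^d=\prod_{e\mid d}\Phi_e(u,v)$, and both are factors of $u^n-v^n$. So $\alpha$ would be a repeated root of $T^n-1$ modulo $p$. This is impossible when $p\nmid n$, because the derivative $nT^{n-1}$ does not vanish at the unit $\alpha$. Therefore $d=n$, and Fermat's little theorem gives $n\mid p-1$.

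For (iii), write $n=p^km'$ with $p\nmid m'$ and $k\geq1$. Applying \eqref{Phinp} repeatedly, or equivalently the identity $\Phi_{p^km'}(T)=\Phi_{m'}(T^{p^k})/\Phi_{m'}(T^{p^{k-1}})$, and using $x^p\equiv x$ modulo $p$, gives
\[\Phi_n(u,v)\equiv \Phi_{m'}(u,v)^{\varphi(p^k)}\bmod p.\]
So $p\mid\Phi_{m'}(u,v)$, and by (ii), since $p\nmid m'$, we get $\ord_p(\alpha)=m'=n/p^k$ and $p\equiv1\bmod m'$. In particular every prime factor of $m'$ is less than $p$, and $p$ is the largest prime factor of $n$. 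When $m'=1$ the claim that $p$ is largest is trivial, and the order claim reads $\alpha\equiv1$, which is consistent.

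The main obstacle is getting the congruence in (iii) right in homogenised form. I would handle it by working in $\mathbb{F}_p[X,Y]$, where $\Phi_{m'}(X^{p},Y^{p})=\Phi_{m'}(X,Y)^p$. Then \eqref{Phinp} gives $\Phi_{m'p}\equiv\Phi_{m'}^{p-1}$. Iterating with the general identity $\Phi_{Np}(X,Y)=\Phi_N(X^p,Y^p)$ for $p\mid N$ yields $\Phi_{m'p^k}\equiv\Phi_{m'}^{p^{k-1}(p-1)}$.
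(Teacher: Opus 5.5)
Your proof is correct and follows essentially the paper's route. The congruence $\Phi_n\equiv\Phi_{n_0}^{\varphi(p^k)}$ in $\mathbb{F}_p[X,Y]$ reduces (iii) to the case $p\nmid n$, and separability of $T^{n_0}-1$ over $\mathbb{F}_p$ then pins down the order (you use the derivative test where the paper uses pairwise coprimality of the $\Phi_d$). The remaining differences are cosmetic: you prove (i) from the constant and leading coefficients rather than from $u^n\equiv v^n$, and the repeated-root step in (ii) should be stated at the polynomial level, as $\Phi_n(T)(T^d-1)\mid T^n-1$, rather than in terms of integer factors of $u^n-v^n$.
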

\begin{proof}
Since $\Phi_n(u,v)\mid \left(u^n-v^n\right)$, we have $u^n\equiv v^n\bmod p$, so if $p$ divided one of $u$ or $v$, it would divide both. This contradicts $\gcd(u,v)=1$ and part (i) follows.
\medskip

The reduction $\alpha\bmod p$ is a well-defined element in $\mathbb F_p^\times$ and $p$ divides $\Phi_n(u,v)=v^{\varphi(n)}\Phi_{n}(\alpha)$,  thus $\Phi_{n}(\alpha)\equiv 0\bmod p$. 
Writing $n=n_0p^k$ where $p\nmid n_0$, the following identity in $\mathbb{F}_p[T]$
\[\Phi_n(T)\equiv\Phi_{n_0}(T)^{\varphi(p^k)}\bmod p\]
can be proved easily using \eqref{Phinp} or via M\"{o}bius inversion. Thus $\Phi_{n_0}(\alpha)\equiv 0\bmod p$ and we have $\alpha^{n_0}\equiv 1\bmod p$. 
Suppose that $\alpha^d-1\equiv 0\bmod p$ for some $d\mid n_0$. Then $\alpha$ is a root modulo $p$ of 
\[T^{d}-1=\prod_{e\mid d}\Phi_e(T)\]
and hence $\Phi_e(\alpha)\equiv 0\bmod p$ for some $e\mid d$ and $d\mid n_0$. 
However, the polynomial $T^{n_0}-1$ is separable over $\mathbb{F}_p$ and the factors in the decomposition
\[T^{n_0}-1=\prod_{d\mid n_0}\Phi_d(T)\]
are pairwise coprime over $\mathbb{F}_p$. We deduce that $e=d=n_0$ and so $\ord_p(\alpha)=n_0=n/p^k$. 
\medskip

Clearly this order $n_0$ divides $\left|\mathbb{F}_p^\times\right|=p-1$ and when $k=0$, we obtain $p\equiv 1\bmod n$. 
When $k\geq 1$, any prime $q\neq p$ dividing $n$ divides $n_0$ and so $q\mid p-1$. Hence $p$ is the largest prime factor of $n$.
\end{proof}

We next relate Fermat quotients with values of cyclotomic polynomials. 
\begin{lemma}\label{conglem}
Suppose that prime $p\mid\Phi_n(u,v)$ and $p\nmid n$. Then
\[\dfrac{\Phi_{n}(u,v)}{p}\equiv q_p(\alpha)v\Phi_{n,Y}(u,v)\bmod p\] 
and moreover, $\Phi_{n,Y}(u,v)\not\equiv 0\bmod p$.
\end{lemma}
\begin{proof} 
Using \eqref{euler} and the assumption $p\mid\Phi_n(u,v)$, we have
\begin{equation}\label{euler2}
u\Phi_{n,X}(u,v)\equiv -v\Phi_{n,Y}(u,v)\bmod p.
\end{equation}
Now, $u^p-u\equiv v^p-v\equiv 0\bmod p$, so by taking a Taylor expansion we find
\begin{equation}\begin{aligned}[b]
\Phi_n(u^p,v^p) 
&\equiv \Phi_n(u,v)+(u^p-u)\Phi_{n,X}(u,v)+(v^p-v)\Phi_{n,Y}(u,v)\bmod p^2  \\
&\equiv \Phi_n(u,v)+p\left[q_p(u)u\Phi_{n,X}(u,v)+q_p(v)v\Phi_{n,Y}(u,v)\right]\bmod p^2 \\
&\equiv\Phi_n(u,v)-p\left(q_p(u)-q_p(v)\right)v\Phi_{n,Y}(u,v) \bmod p^2 \\
&\equiv\Phi_n(u,v)-pq_p(\alpha)v\Phi_{n,Y}(u,v)\bmod p^2. \label{cong}
\end{aligned}\end{equation}
Note that since $p\nmid n$, we have
\[\Phi_n(X,Y)\Phi_{np}(X,Y)=\Phi_n(X^p,Y^p)\equiv\Phi_n(X,Y)^p\bmod p.\]
and so $\Phi_{np}(X,Y)\equiv\Phi_n(X,Y)^{p-1}\bmod p$. 
In particular, $\Phi_{np}(u,v)\equiv\Phi_{n}(u,v)^{p-1}\equiv 0\bmod p$ and hence 
\[\Phi_n(u^p,v^p)=\Phi_n(u,v)\Phi_{np}(u,v)\equiv 0 \bmod p^2.\]
The congruence follows on dividing \eqref{cong} by $p$.
\medskip

By the previous lemma, $\alpha$ is a simple root of $\Phi_n(T)\bmod p$ so we have $\Phi_n'(\alpha)\not\equiv 0\bmod p$. Hence
\[\Phi_{n,X}(u,v)=v^{\varphi(n)-1}\Phi_n'(\alpha)\not\equiv 0\bmod p\] 
and $\Phi_{n,Y}(u,v)\not\equiv 0\bmod p$ follows from \eqref{euler2}.
\end{proof}
\medskip

Finally, we need a lemma controlling primes $p\mid\Phi_n(u,v)$ as $n$ increases. We first record the trivial estimates
\begin{equation}\label{trivest}
(u-v)^{\varphi(n)}\leq\Phi_n(u,v)\leq (u+v)^{\varphi(n)}
\end{equation}
which follow from the product expansion in \eqref{homPhi}. Note we are assuming $u>v\geq 1$ throughout so $\Phi_n(u,v)$ is positive.
\begin{lemma}\label{limlem}
For $n>1$, let $r_n=\#\{p\text{ prime} : p\mid\Phi_n(u,v), p\nmid n\}$. Then 
\[r_n<\frac{\varphi(n)\log(u+v)}{\log n}
\qquad\text{and}\qquad
\lim_{n\to\infty}\sum_{\substack{p\mid\Phi_n(u,v) \\ p\nmid n}}\frac{1}{p}=0.\]
\end{lemma}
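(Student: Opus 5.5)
By Lemma \ref{ordlem}(ii), every prime $p\mid\Phi_n(u,v)$ with $p\nmid n$ satisfies $p\equiv 1\bmod n$, so $p\geq n+1>n$. The plan is to combine this lower bound on each such prime with the trivial upper bound \eqref{trivest} on $\Phi_n(u,v)$.

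For the count $r_n$, the product of the distinct primes $p\mid\Phi_n(u,v)$ with $p\nmid n$ divides $\Phi_n(u,v)$. Since each of these $r_n$ primes exceeds $n$, and $\Phi_n(u,v)$ is positive,
\[n^{r_n}<\prod_{\substack{p\mid\Phi_n(u,v)\\ p\nmid n}}p\leq\Phi_n(u,v)\leq(u+v)^{\varphi(n)}.\]
The first inequality is strict when $r_n\geq1$. When $r_n=0$ the claimed bound is trivial, since $\varphi(n)\log(u+v)>0$ because $u+v\geq 3$. Taking logarithms and dividing by $\log n>0$, valid as $n>1$, gives $r_n<\varphi(n)\log(u+v)/\log n$.

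For the limit, bound each reciprocal by $1/p<1/n$. Then
\[\sum_{\substack{p\mid\Phi_n(u,v)\\ p\nmid n}}\frac1p\leq\frac{r_n}{n}<\frac{\varphi(n)}{n}\cdot\frac{\log(u+v)}{\log n}\leq\frac{\log(u+v)}{\log n},\]
and the right-hand side tends to $0$ as $n\to\infty$.

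There is no real obstacle here. The only point needing care is that Lemma \ref{ordlem}(ii) applies to every prime counted, which is exactly the hypothesis $p\nmid n$, so each such prime satisfies $p>n$.
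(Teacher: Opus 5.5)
Your proposal is correct and follows the paper's proof essentially verbatim: you use Lemma \ref{ordlem}(ii) to get $p>n$ for each counted prime, compare $n^{r_n}$ with the bound \eqref{trivest}, and then use $\sum 1/p\leq r_n/n$ with $\varphi(n)\leq n$. Your explicit check that $\log(u+v)>0$ in the $r_n=0$ case is a harmless extra detail the paper leaves implicit.
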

\begin{proof}
When $r_n=0$, the inequality holds trivially so assume $r_n>0$. For each prime $p$ dividing $\Phi_n(u,v)$ with $p\nmid n$, 
we have $p\equiv 1\mod n$ by Lemma \ref{ordlem}, so $p>n$. By \eqref{trivest},
\begin{align*}
n^{r_n}<\prod_{\substack{p\mid\Phi_n(u,v) \\ p\nmid n}}\!\! p\;\ \leq \Phi_n(u,v)
\leq (u+v)^{\varphi(n)}
\end{align*}
and the inequality for $r_n$ follows. Furthermore, $\varphi(n)<n$ so
\[\sum_{\substack{p\mid\Phi_n(u,v) \\ p\nmid n}}\frac{1}{p}
\leq\frac{r_n}{n}
<\frac{\log(u+v)}{\log n}\longrightarrow 0\qquad\text{as $n\to\infty$.}\]
\vspace{-1cm}

\end{proof}

\section{Irrationality of finite logarithms}
\subsection{Non-zero rational values in $\cA(m)$} $\,$\\[3pt]
To prove Theorem \ref{logAm}(i), we adapt the method used in \cite{MS26} to fit the template of many classical irrationality proofs by contradiction: assuming rationality, define a sequence of non-zero integers that tends to zero.
\begin{proof}[Proof of Theorem \ref{logAm}(i)] 
Fix $m\geq 1$ and suppose for contradiction there exist $a, b\in\mathbb{Z}$ with $a\neq 0$, $b\geq 1$, $\gcd(a,b)=1$ and a bound $N>\max\{|a|,b\}$ such that for all primes $p>N$ with $p\equiv 1\bmod m$,  we have
\[q_p(\alpha)\equiv \frac{a}{b}\bmod p.\] 
\vspace{-0.1cm}

\noindent
Let $w=\Phi_m(u,v)$ and consider $W_\ell=\Phi_{m\ell}(u,v)$ for primes $\ell>\max\{N, m, (u+v)^{\varphi(m)}\}$. 
Since $\ell\nmid m$, by \eqref{Phinp} we have  
\[W_\ell=\Phi_{m\ell}(u,v)=\frac{\Phi_m(u^\ell,v^\ell)}{w}.\]
We also note $\ell>w$ since $(u+v)^{\varphi(m)}\geq w$ by \eqref{trivest}.
\vspace{0.2cm}

Given a prime $p\mid W_\ell$, we automatically have $p\nmid m\ell$. Indeed, if $p\mid m\ell$, then by Lemma \ref{ordlem}(iii) the largest prime factor of $m\ell$ would be $\ell=p$. However, 
\[wW_\ell=\Phi_m(u^\ell,v^\ell)\equiv\Phi_m(u,v)^\ell\equiv w^\ell\mod\ell\]
and so $W_\ell\equiv w^{\ell-1}\equiv 1\mod\ell$, contradicting $\ell=p\mid W_\ell$. 
Consequently, $p\equiv 1\mod m\ell$ by Lemma \ref{ordlem}(ii) so $p\equiv 1\mod m$ and $p>\ell>N$.
Hence $q_p(\alpha)\equiv ab^{-1}\not\equiv 0\bmod p$ and Lemma \ref{conglem} therefore gives
\begin{equation*}\label{Wlcong}
\frac{W_\ell}{p}\equiv q_p(\alpha)v\Phi_{m\ell,Y}(u,v)\not\equiv 0\bmod p.
\end{equation*}
This means $p^2\nmid W_\ell$ for every prime $p\mid W_\ell$ and so $W_\ell$ is square-free.
\bigskip

Differentiate $\Phi_m(X^\ell,Y^\ell)=\Phi_m(X,Y)\Phi_{m\ell}(X,Y)$ with respect to $Y$ to get
\[\ell Y^{\ell-1}\Phi_{m,Y}(X^\ell,Y^\ell)=\Phi_m(X,Y)\Phi_{m\ell,Y}(X,Y)+\Phi_{m\ell}(X,Y)\Phi_{m,Y}(X,Y).\]
Using this, define the following integer
\[R_\ell=\ell v^\ell\Phi_{m,Y}(u^\ell,v^\ell)=wv\Phi_{m\ell,Y}(u,v)+W_\ell v\Phi_{m,Y}(u,v)\]
and for each $p\mid W_\ell$, we now have a congruence
\begin{equation*}\label{wWlcong}
w\frac{W_\ell}{p}\equiv q_p(\alpha)R_\ell\bmod p.
\end{equation*}
Given that $q_p(\alpha)\equiv ab^{-1}\bmod p$, for each prime $\ell$ as above, set
\begin{equation}\label{Tell}
T_\ell=bwS_\ell-aR_\ell\in\mathbb{Z}\qquad\text{where}\qquad S_\ell=\sum_{p\mid W_\ell}\frac{W_\ell}{p}\in\mathbb{Z}.
\end{equation}
Notice that for each specific prime $p\mid W_\ell$, the summands $W_\ell/p'$ for $p'\neq p$ are divisible by $p$ so
$S_\ell\equiv W_\ell/p\mod p$. In particular, 
\[T_\ell\equiv bw\frac{W_\ell}{p}-aR_\ell\equiv bq_p(\alpha)R_\ell-aR_\ell\equiv 0\bmod p.\]
Thus primes dividing $W_\ell$ also divide $T_\ell$, and since $W_\ell$ is square-free, we obtain $T_\ell/W_\ell\in\mathbb{Z}$.
\bigskip

We next show the sequence $T_\ell/W_\ell$ tends to zero. By Lemma \ref{limlem}, the sum $\sum_{p\mid W_\ell}1/p\to 0$ as $\ell\to\infty$. Also,
\[\frac{R_\ell}{W_\ell}=\ell wv^\ell\frac{\Phi_{m,Y}(u^\ell,v^\ell)}{\Phi_m(u^\ell,v^\ell)}
=\ell w\alpha^{-\ell}\frac{\Phi_{m,Y}(1,\alpha^{-\ell})}{\Phi_m(1,\alpha^{-\ell})}
\]
where we have used that $\Phi_m(X,Y)$ is homogeneous of degree $\varphi(m)$ and $\Phi_{m,Y}(X,Y)$ is homogeneous of degree $\varphi(m)-1$. The logarithmic derivative factor is bounded. Indeed, taking the logarithmic $Y$-derivative of \eqref{homPhi} with $n=m$ and setting $X=1$ and $Y=\alpha^{-\ell}$, we have 
\[\left|\frac{\Phi_{m,Y}(1,\alpha^{-\ell})}{\Phi_m(1,\alpha^{-\ell})}\right|\leq
\sum_{\substack{1\leq j<m \\ \gcd(j,m)=1}}\left|\frac{-\zeta_m^j}{1-\zeta_m^j\alpha^{-\ell}}\right|
\leq\frac{\varphi(m)}{1-\alpha^{-\ell}}.\]
Hence 
\[\left|\frac{R_\ell}{W_\ell}\right|\leq\ell w\varphi(m)\frac{\alpha^{-\ell}}{1-\alpha^{-\ell}}=\frac{lw\varphi(m)}{\alpha^\ell-1}\]
and this tends to zero since $\alpha=u/v>1$. Overall, 
\[\left|\frac{T_\ell}{W_\ell}\right|\leq bw\sum_{p\mid W_\ell}\frac{1}{p}+|a|\left|\frac{R_\ell}{W_\ell}\right|\longrightarrow 0
\]
as primes $\ell\to\infty$.
\medskip

Finally, we show $T_\ell\not\equiv 0\mod \ell$. 
As before, we have $W_\ell\equiv 1\bmod \ell$ and each prime divisor $p$ of $W_\ell$ satisfies $p\equiv 1\mod \ell$ and hence $W_\ell/p\equiv 1\mod\ell$. By definition, $\ell\mid R_\ell$ so
\[T_\ell\equiv bwr_{m\ell}\bmod \ell\]
using the notation from Lemma \ref{limlem}. We have chosen $\ell$ so that $1\leq b<\ell$ and $1\leq w<\ell$ so $bw\not\equiv 0\bmod\ell$. We also chose $\ell>(u+v)^{\varphi(m)}$ and hence by Lemma \ref{limlem}, 
\[r_{m\ell}<\frac{(\ell-1)\varphi(m)\log(u+v)}{\log(m\ell)}<\ell-1.\]
Furthermore, by \eqref{trivest}, 
\[W_\ell=\frac{\Phi_m(u^\ell,v^\ell)}{\Phi_m(u,v)}\geq\left(\frac{u^\ell-v^\ell}{u+v}\right)^{\varphi(m)}>1
\]
since $u^\ell-v^\ell>u+v$ for every $\ell\geq 3$. Thus $W_\ell$ has a prime divisor and $1\leq r_{m\ell}<\ell$. 
We conclude that $T_\ell\not\equiv 0\bmod \ell$. In particular, $T_\ell/W_\ell$ is a non-zero integer and we obtain the required contradiction.
\end{proof}

\subsection{Zero values in $\cA(m)$} $\,$\\[3pt]
Graves and Ram Murty (Theorem 3.1 in \cite{GRM13}) demonstrate that there are infinitely many primes $p\equiv 1\bmod m$ such that $q_p(\alpha)\not\equiv 0\bmod p$ for integer $\alpha>1$, assuming the $abc$-conjecture. In fact, they show that the number of such primes not exceeding $x$ is $\gg \frac{\log x}{\log\log x}$ and this bound was later improved to $\gg\log x$ by Ding \cite{D19}. For completeness, we will briefly show there are infinitely many such primes for arbitrary rational $\alpha\in\mathbb{Q}^\times\setminus\{\pm 1\}$. 

The input from the $abc$-conjecture appears via the following result used in Silverman's proof of Lemma 7 in \cite{Si88}.
\begin{lemma}\label{Silabc}
Let $\alpha=u/v$ where $u>v\geq 1$ with $\gcd(u,v)=1$. For each $n\geq 1$, write $u^n-v^n=A_nB_n$ where $B_n$ is the powerful part of $u^n-v^n$, that is
\[A_n=\prod_{p||u^n-v^n} p\qquad\text{and}\qquad 
  B_n=\prod_{\substack{p^e||u^n-v^n \\ e\geq 2}}p^e.\]
Assuming the $abc$-conjecture, for every $\epsilon>0$ we have
$B_n\ll_{\alpha,\epsilon} u^{\epsilon n}$.
\end{lemma}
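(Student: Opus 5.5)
The plan is to apply the $abc$-conjecture to the triple $v^n + (u^n-v^n) = u^n$. These three numbers are pairwise coprime because $\gcd(u,v)=1$. The $abc$-conjecture then says that for every $\epsilon'>0$,
\[u^n \ll_{\epsilon'} \rad\bigl(uv(u^n-v^n)\bigr)^{1+\epsilon'} \leq \bigl(uv\,\rad(u^n-v^n)\bigr)^{1+\epsilon'}.\]
The main task is to bound $\rad(u^n-v^n)$ in terms of the splitting $u^n-v^n=A_nB_n$.

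First, $A_n$ is square-free and coprime to $B_n$, so $\rad(u^n-v^n)=A_n\rad(B_n)$. Every prime appearing in $B_n$ does so with exponent at least $2$, so $\rad(B_n)\leq B_n^{1/2}$. Writing $A_n=(u^n-v^n)/B_n$, we get
\[\rad(u^n-v^n)\leq \frac{u^n-v^n}{B_n}B_n^{1/2}\leq \frac{u^n}{B_n^{1/2}}.\]
Substituting this into the $abc$ inequality gives
\[u^n\ll_{\epsilon',\alpha} \frac{u^{n(1+\epsilon')}}{B_n^{(1+\epsilon')/2}}.\]
Rearranging, this is $B_n^{(1+\epsilon')/2}\ll_{\epsilon',\alpha} u^{\epsilon' n}$, and hence $B_n\ll u^{2\epsilon' n/(1+\epsilon')}\leq u^{2\epsilon' n}$. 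Choosing $\epsilon'=\epsilon/2$ gives $B_n\ll_{\alpha,\epsilon}u^{\epsilon n}$. The factor $(uv)^{1+\epsilon'}$ depends only on $\alpha$, so it goes into the implied constant.

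The main point needing care is the bound $\rad(B_n)\leq B_n^{1/2}$ together with correct bookkeeping of exponents. Nothing here needs the cyclotomic machinery. This is the argument in Silverman's Lemma 7, and I would follow it directly.
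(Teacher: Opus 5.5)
Your proposal is correct and is essentially the paper's proof: you apply the $abc$-conjecture to $v^n+(u^n-v^n)=u^n$, bound $\rad(u^n-v^n)\le A_nB_n^{1/2}\le u^n/B_n^{1/2}$, and rearrange to $B_n\ll u^{2\epsilon' n/(1+\epsilon')}$. The only differences are cosmetic: you use $uv$ where the paper uses $\rad(uv)$, and you make the explicit choice $\epsilon'=\epsilon/2$ where the paper simply takes $\delta$ sufficiently small.
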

\begin{proof}
Since
\[\rad(u^nv^n(u^n-v^n))\leq \rad(uv)A_nB_n^{1/2}=\rad(uv)\frac{u^n-v^n}{B_n^{1/2}}\leq\rad(uv)\frac{u^n}{B_n^{1/2}},\]
applying the $abc$-conjecture to
\[v^n+(u^n-v^n)=u^n\]
gives
\[u^n\ll_{\alpha,\delta}\left(\frac{u^n}{B_n^{1/2}}\right)^{1+\delta}\]
for every $\delta>0$. As a result, we have 
\[B_n^{(1+\delta)/2}\ll_{\alpha,\delta}u^{\delta n}\quad\implies\quad
B_n\ll_{\alpha,\delta}u^{\frac{2\delta}{1+\delta}n}.\]
The lemma follows by taking sufficiently small $\delta$. 
\end{proof}

\begin{proof}[Proof of Theorem \ref{logAm}(ii)]
Suppose the $abc$-conjecture holds. 
As before, we can assume that $\alpha=u/v$ with $u>v\geq 1$ and $\gcd(u,v)=1$. 
Suppose for contradiction that there exists $N$ such that $q_p(\alpha)\equiv 0\bmod p$ for each prime $p>N$ with $p\equiv 1\mod m$. 
\medskip

We consider primes $\ell>\max\{N,m,(u+v)^{\varphi(m)}\}$ and set 
\[W_\ell=\Phi_{m\ell}(u,v)=\frac{\Phi_m(u^\ell,v^\ell)}{\Phi_m(u,v)}.\] 
As in the previous part, every prime $p\mid W_\ell$ satisfies $p\nmid m\ell$ and $p\equiv 1\bmod m\ell$. In particular, 
\[p>\ell>N\qquad\text{and}\qquad p\equiv 1\mod m\]
so our assumption gives $q_p(\alpha)\equiv 0\bmod p$. Applying Lemma \ref{conglem} with $n=m\ell$, we obtain
\[\frac{W_\ell}{p}\equiv q_p(\alpha)v\Phi_{m\ell,Y}(u,v)\equiv 0\bmod p.\]
As a result, $p^2\mid W_\ell$ for every $p\mid W_\ell$ and $W_\ell$ is square-full (equivalently, powerful). 
Also, since $W_\ell\mid \left(u^{m\ell}-v^{m\ell}\right)$, we have 
\[W_\ell\mid B_{m\ell}\]
where $B_{m\ell}$ is the powerful part of $u^{m\ell}-v^{m\ell}$. Applying Lemma \ref{Silabc} with $\epsilon=\varphi(m)/2m$, we obtain
\[W_\ell\leq B_{m\ell}\ll_{\alpha,m}u^{\ell\varphi(m)/2}.\]
On the other hand, the cyclotomic estimates \eqref{trivest} give
\[W_\ell=\frac{\Phi_m(u^\ell,v^\ell)}{\Phi_m(u,v)}
\geq \frac{(u^\ell-v^\ell)^{\varphi(m)}}{(u+v)^{\varphi(m)}}
\geq \left(u^\ell\frac{1-v/u}{u+v}\right)^{\varphi(m)}
\gg_{\alpha,m} u^{\ell\varphi(m)}.\]
These upper and lower bounds for $W_\ell$ are incompatible for sufficiently large $\ell$ so we have a contradiction.
\end{proof}
\begin{remark}
The restriction $p\equiv 1\bmod m$ is intrinsic to the cyclotomic method presented here. 
Indeed, outside exceptional primes, a divisor $p\mid\Phi_{m\ell}(u,v)$ naturally lies in the class $p\equiv 1 \bmod m$. No analogous choice of these cyclotomic values appears to force a prescribed reduced class $p\equiv b\bmod m$ for $b\not\equiv 1\bmod m$ and the $abc$-conditional arguments of \cite{GRM13} and \cite{D19} have the same limitation. This reflects the familiar fact that the infinitude of primes $p\equiv 1\bmod m$ has an elementary cyclotomic proof, whereas Dirichlet's theorem on primes in arithmetic progressions lies much deeper.
\end{remark}

\subsection{Squares of finite logarithms in $\cA$} $\,$\\[3pt]
The condition $\log_\cA(\alpha)^2=c\in\mathbb{Q}^\times$ is equivalent to saying that for sufficiently large primes $p$, we have
\[q_p(\alpha)^2\equiv c\bmod p.\]
Since a non-square rational number is a quadratic non-residue modulo infinitely many primes, we must have $c=(a/b)^2$ for integers $a\neq 0$, $b\geq 1$, $\gcd(a,b)=1$. Then for every sufficiently large prime $p$, there is $\varepsilon_p\in\{\pm 1\}$ such that
\[q_p(\alpha)\equiv \varepsilon_p\frac{a}{b}\bmod p.\]
As before, for $\alpha=u/v\in\mathbb{Q}^\times\setminus\{\pm 1\}$, we need only consider $u>v\geq 1$ with $\gcd(u,v)=1$.
\smallskip

To obtain a contradiction, we will follow the same method as for Theorem \ref{logAm}. The essential differences are that $m$ will be chosen to be a suitable fixed auxiliary prime, the integer \eqref{Tell} will be replaced by a signed version involving $\varepsilon_p$, and $\ell$ will range over suitably large primes with $\ell\equiv 1\bmod m(m-1)$.

\begin{proof}[Proof of Theorem \ref{logsquare}] 
Suppose for contradiction there exists a bound $N>\max\{|a|,b\}$ such that for all primes $p>N$, we have
\[q_p(\alpha)\equiv\varepsilon_p\frac{a}{b}\bmod p \qquad\text{where $\varepsilon_p\in\{\pm 1\}$}.\] 
Fix an odd prime $m$ such that $m\nmid av(u-v)$ and set $w=\Phi_m(u,v)$.
For each prime $\ell>\max\{N,m,(u+v)^{m-1}\}$ with $\ell\equiv 1\bmod m(m-1)$, we set 
\[W_\ell=\Phi_{m\ell}(u,v)\qquad\text{and}\qquad R_\ell=\ell v^\ell\Phi_{m,Y}(u^\ell,v^\ell).\]
As before, any prime $p\mid W_\ell$ satisfies $p\equiv 1\mod m\ell$ and
\[w\frac{W_\ell}{p}\equiv q_p(\alpha)R_\ell\bmod p.\]
Furthermore, the right-hand side is non-zero modulo $p$ and $W_\ell$ is square-free. 
We define a signed version of \eqref{Tell} by 
\begin{equation*}
T_\ell^\varepsilon=bwS_\ell^\varepsilon-aR_\ell\in\mathbb{Z}\qquad\text{where}\qquad S_\ell^\varepsilon=\sum_{p\mid W_\ell}\varepsilon_p\frac{W_\ell}{p}\in\mathbb{Z}.
\end{equation*}
As in the proof of Theorem \ref{logAm}(i), we can conclude that $T_\ell^\varepsilon/W_\ell\in\mathbb{Z}$. 
Indeed, for $p\mid W_\ell$, we have $S_\ell^\varepsilon\equiv \varepsilon_pW_\ell/p\bmod p$ and so 
\[T_\ell^\varepsilon\equiv b\varepsilon_pq_p(\alpha)R_\ell-aR_\ell\equiv 0\bmod p.\]
Similarly, we can conclude that $T_\ell^\varepsilon/W_\ell\to 0$ as $\ell\to\infty$. 
Indeed, this follows since $\left|R_\ell/W_\ell\right|\to 0$ as before, and Lemma \ref{limlem} gives
\[\left|\frac{S_\ell^\varepsilon}{W_\ell}\right|\leq \sum_{p\mid W_\ell}\frac{1}{p}\longrightarrow 0.\]
The only remaining step is to show that $T_\ell^\varepsilon/W_\ell$ is non-zero. 
Set 
\[E_\ell=\sum_{p\mid W_\ell}\varepsilon_p.\]
As before, we have $S_\ell^\varepsilon\equiv E_\ell\bmod \ell$, $R_\ell\equiv 0\bmod \ell$ and hence 
\[T_\ell^\varepsilon\equiv bwE_\ell\bmod \ell.\]
We have chosen $\ell$ so that $1\leq b<\ell$ and $1\leq w<\ell$ so $bw\not\equiv 0\bmod\ell$. 
We also chose $\ell>(u+v)^{m-1}$ and hence by Lemma \ref{limlem},
\[\left|E_\ell\right|\leq r_{m\ell}<\frac{(\ell-1)(m-1)\log(u+v)}{\log(m\ell)}<\ell-1.\]
Thus, if $E_\ell\neq 0$, we deduce that $T_\ell^\varepsilon\not\equiv 0\bmod \ell$.
\medskip

On the other hand, suppose $E_\ell=0$. 
Since every prime $p\mid W_\ell$ satisfies $p\equiv 1 \bmod m$ and $W_\ell$ is square-free, we have $W_\ell/p\equiv 1\bmod m$. Hence $S_\ell^\varepsilon\equiv E_\ell\equiv 0\bmod m$ and
\[T_\ell^\varepsilon=bwS_\ell^\varepsilon-aR_\ell\equiv -aR_\ell\bmod m.\]
Now $\Phi_m(X,Y)\equiv (X-Y)^{m-1}\bmod m$ and so $\Phi_{m,Y}(X,Y)\equiv (X-Y)^{m-2}\bmod m$. 
Furthermore, we are letting $\ell$ range over the infinitely many primes of the form
\[\ell\equiv 1\bmod m(m-1).\] 
Thus $\ell\equiv 1\bmod m$ and $\ell\equiv 1\bmod m-1$ so $u^\ell\equiv u\bmod m$ and $v^\ell\equiv v\bmod m$. 
In particular, we now have
\[R_\ell=\ell v^\ell\Phi_{m,Y}(u^\ell,v^\ell)\equiv v(u-v)^{m-2}\not\equiv 0\bmod m.\]
Thus, if $E_\ell=0$, we deduce that $T_\ell^\varepsilon\not\equiv 0\bmod m$.
\medskip

In either case, we find $T_\ell^\varepsilon/W_\ell$ is a non-zero integer which tends to zero as $\ell\to\infty$, giving the required contradiction. 
This completes the proof of Theorem \ref{logsquare}(i). 
Finally, part (ii) follows immediately from Theorem \ref{MSS}(ii), since $\cA$ is a reduced ring: for any ${\bf t}\in\cA$, if ${\bf t}^2=0$ then ${\bf t}=0$.
\end{proof}

\begin{remark}
There is an alternative finite logarithm that can be considered in $\cA$, as well as finite analogues of higher polylogarithms. 
For prime $p$ and integer $d\geq 1$, the {\it finite $d$-logarithm} is
\[\LL_{d}(X)=\sum_{k=1}^{p-1}\frac{X^k}{k^d}.\]
For $d=1,2$, these are related to Fermat quotients modulo $p$ and in particular (see e.g. \cite{Gr04}), for $p>3$ we have simple identities
\[\LL_{1}(2)\equiv -2q_p(2)\bmod p\qquad\text{and}\qquad\LL_{2}(2)\equiv -q_p(2)^2\bmod p.\]
In view of Theorems \ref{MSS} and \ref{logsquare}, we deduce $\left(\LL_{1}(2)\bmod p\right)_p$ and $\left(\LL_{2}(2)\bmod p\right)_p$ are not in $\mathbb{Q}^\times\subset\cA$ unconditionally and, assuming the $abc$-conjecture, are not rational elements of $\cA$.
\end{remark}

\section{Quadratic finite algebraic numbers}
The restriction to primes in arithmetic progressions also distinguishes finite logarithms from certain natural algebraic elements of $\cA$ in the context of Rosen's theory of finite algebraic numbers \cite{Ro20}. Let $\chi$ be a non-trivial quadratic Dirichlet character with conductor $N$. Then 
for any $\alpha\in\mathbb{Q}^\times\setminus\{\pm 1\}$ and $c\in\mathbb{Q}^\times$, we have
\[\log_\cA(\alpha)\neq (c\chi(p))_p.\]
Indeed, if we had equality, then restricting to sufficiently large primes $p\equiv 1\bmod N$ gives $\log_{\cA(N)}(\alpha)=c$, contradicting Theorem \ref{logAm}(i). In this section, we will use the same idea to show that, provided the $abc$-conjecture holds, $\log_\cA(\alpha)$ cannot be a quadratic irrational element of $\cA$. 
To this end, we need to first say what we mean by a quadratic element in $\cA$. 
Following the set-up in \cite{RTTY24}, given a monic polynomial
\[f(x)=x^d+c_1x^{d-1}+\cdots +c_d\in\mathbb{Q}[x]\] 
one defines the $\mathbb{Q}$-vector space
\[\Rec(f;\mathbb{Q})=\left\{(a_n)_n\in\prod_{n\geq 0}\mathbb{Q}\; \middle|\; a_n+c_1a_{n-1}+\cdots+c_da_{n-d}=0
\text{ for all $n\geq d$}\right\}.\]
The set of primes occurring in the denominators of initial values $a_0,\dots,a_{d-1}$ and in the denominators of the coefficients of $f$ is finite. Outside of this set, every $a_p$ is $p$-integral so there is a well-defined $\mathbb{Q}$-linear map 
\begin{align*}
r_f:\Rec(f;\mathbb{Q})&\longrightarrow\cA \\
(a_n)_n&\longmapsto (a_p\bmod p)_p
\end{align*}
The union of the images of $r_f$ as $f$ ranges over all monic polynomials in $\mathbb{Q}[x]$ is precisely the set of finite algebraic numbers in $\cA$ defined in \cite{Ro20}. 
For ${\bf t}\in\cA$, we define its recurrence degree to be the minimal order of all possible recurrences representing the element ${\bf t}$ 
\[\deg_{\cA}({\bf t})=\min\left\{\deg f \;\middle|\; f(x)\in\mathbb{Q}[x]\text{ monic, non-constant and ${\bf t}\in\im(r_f)$}  \right\}\]
with the convention that $\deg_{\cA}({\bf t})=\infty$ when no such $f$ exists. 
We have the following simple criterion for rational and quadratic elements.
\begin{proposition}\label{quadinA}
An element ${\bf t}\in\cA$ satisfies $\deg_\cA({\bf t})\leq 2$ if and only if there exist $r,s\in\mathbb{Q}$ and a square-free integer $D\neq 1$ such that
\[{\bf t}=\left(r+s\leg{D}{p}\right)_p.\]
Moreover, $\deg_\cA({\bf t})=2$ if and only if $s\neq 0$.
\end{proposition}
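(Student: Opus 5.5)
We treat the two directions separately, working with linear recurrences over $\mathbb{Q}$ of order $d\le 2$ evaluated at primes $n=p$.

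\emph{Sufficiency.} Suppose ${\bf t}=(r+s(D/p))_p$ with $D\ne 1$ square-free. Put $\theta=\sqrt{D}$ and set
\[a_n=r+\frac{s}{2}\left(\theta^{n-1}+(-\theta)^{n-1}\right)\quad\text{when $D$ is not a square}.\]
Both terms lie in $\mathbb{Q}$, and since $D=\theta^2$ the sequence is explicitly rational.

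A cleaner choice comes from Euler's criterion, $D^{(p-1)/2}\equiv (D/p)\bmod p$ for $p\nmid 2D$. Take $a_n$ equal to $D^{(n-1)/2}$ for odd $n$ and $0$ for even $n$. Then $a_n=\tfrac12\big(\theta^{n-1}+(-\theta)^{n-1}\big)$ and $a_n\in\Rec(x^2-D;\mathbb{Q})$.

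Hence $s\,a_n$ represents $(s(D/p))_p$. The constant sequence $r$ is killed by $x-1$, not by $x^2-D$, so adding it naively gives order $3$. To get order $2$, use instead
\[b_n=r+s\,\frac{D^{\lfloor (n-1)/2\rfloor}\cdot(\text{parity factor})}{\cdots}.\]
Rather than fight this, the cleanest route is to replace $D$ by $D'=Dk^2$ with $k\in\mathbb{Z}$. The Legendre symbol is unchanged for $p\nmid k$.

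Then consider the recurrence with characteristic roots $1$ and $D'$, namely $f=(x-1)(x-D')$. Its terms $a_n=r'+s'D'^{\,n}$ do not help either, since $D'^{\,p}\equiv D'$.

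So the roots must be $\pm\theta$ with constant term allowed. The correct observation is that $(r)_p$ is itself represented in $\Rec(x^2-D)$. Take the initial values $a_0=r$, $a_1=r$; then $a_n=rD^{\lfloor n/2\rfloor}$, and for odd $p$ this gives $a_p=rD^{(p-1)/2}\equiv r(D/p)$. That construction yields only multiples of $(D/p)$.

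Instead use the roots $\beta,\bar\beta$ of a quadratic $x^2-2cx+(c^2-Dk^2)$, i.e.\ $\beta=c+k\theta$. Frobenius gives $\beta^p\equiv c+k(D/p)\theta$, hence
\[\beta^p+\bar\beta^{\,p}\equiv 2c,\qquad (\beta^p-\bar\beta^{\,p})/(2k\theta)\equiv (D/p).\]
Choose the sequence
\[a_n=x\,(\beta^n+\bar\beta^{\,n})+y\,\frac{\beta^n-\bar\beta^{\,n}}{\beta-\bar\beta}\in\Rec(f;\mathbb{Q}),\]
which gives $a_p\equiv 2cx+y(D/p)$. With $c=1$ we may pick $x=r/2$ and $y=s$; when $D$ is a perfect square $\ne1$, the same argument works with $\theta\in\mathbb{Z}$. (Note that $D=1$ would force $s(D/p)$ to be constant, which is why it is excluded.) This shows $\deg_\cA\le 2$. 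For $s=0$ the constant recurrence gives degree $1$.

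\emph{Necessity.} If $\deg f=1$, say $f=x-\lambda$, then $a_p=a_0\lambda^p\equiv a_0\lambda$ by Fermat, which is rational. If $\deg f=2$ with distinct roots $\beta,\bar\beta$ in $\mathbb{Q}(\sqrt{D})$, write $a_n=\gamma\beta^n+\bar\gamma\bar\beta^{\,n}$. For split $p$, Frobenius fixes $\beta$, so $a_p\equiv\gamma\beta+\bar\gamma\bar\beta$. For inert $p$ it swaps $\beta$ and $\bar\beta$, so $a_p\equiv\gamma\bar\beta+\bar\gamma\beta$. Both values are rational, and interpolating between them gives $r+s(D/p)$.

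If the roots are rational and distinct, $a_p\equiv\gamma\beta+\delta\bar\beta$ is constant. If there is a repeated root $\lambda$, then $a_n=(c_0+c_1n)\lambda^n$ and $a_p\equiv c_0\lambda$ modulo $p$, again constant. The one delicate point here is to justify reducing the algebraic identity modulo a prime of $\mathbb{Q}(\sqrt D)$ above $p$ and then descending back to $\mathbb{Z}/p$; this is routine.

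\emph{Degree exactly $2$.} When $s\ne0$, $(D/p)$ takes both values $\pm1$ infinitely often (quadratic reciprocity together with Dirichlet, using $D\ne1$ square-free). Therefore ${\bf t}$ is not rational in $\cA$, since two rationals agreeing mod infinitely many $p$ must be equal. Since degree-$1$ recurrences represent only rationals, $\deg_\cA({\bf t})=2$.

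The main obstacle is the sufficiency construction: producing $r+s(D/p)$ from a single order-$2$ recurrence. This is handled above by the Lucas-type sequence attached to $\beta=1+\sqrt D$.
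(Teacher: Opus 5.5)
Your final construction is the paper's: with $c=k=1$, $x=r/2$, $y=s$, your sequence $a_n=\tfrac{r}{2}(\beta^n+\bar\beta^{\,n})+s\,\tfrac{\beta^n-\bar\beta^{\,n}}{2\sqrt D}$ is exactly the paper's $u\lambda^n+\bar u\bar\lambda^n$ with $\lambda=1+\sqrt D$ and $u=\tfrac12\bigl(r+\tfrac{s}{D}\sqrt D\bigr)$, in $\Rec(x^2-2x+1-D;\mathbb{Q})$. Your necessity and degree-exactly-$2$ arguments (Fermat for rational or repeated roots, Frobenius on split versus inert primes) also match the paper's, so the proposal is correct and essentially the same proof. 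In a final write-up, delete the abandoned attempts, especially the false claim that nonzero constants are represented in $\Rec(x^2-D;\mathbb{Q})$, whose image is only multiples of $\bigl(\leg{D}{p}\bigr)_p$, and the remark about $D$ being a perfect square, which cannot occur for square-free $D\neq 1$.
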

\begin{proof}
Degree 1 elements are precisely the rational elements of $\cA$. Indeed, for $f(x)=x-c$ the corresponding sequence is $a_n=a_0c^n$, and for every sufficiently large prime $p$,
\[a_p=a_0c^p\equiv a_0c\bmod p.\]
Thus $r_f((a_n)_n)$ is rational. Conversely, rational elements in $\cA$ are represented by constant sequences which satisfy the order $1$ recurrence $a_n-a_{n-1}=0$. 
Now suppose that ${\bf t}$ is represented by a sequence $(a_n)_n\in\Rec(f;\mathbb{Q})$ where $f$ is quadratic. 
\smallskip

If $f$ is reducible over $\mathbb{Q}$, then for $n\geq 2$ we have
\[a_n=u\lambda^n+v\mu^n\qquad\text{or}\qquad a_n=(u+vn)\lambda^n\]
for some $u,v,\lambda,\mu\in\mathbb{Q}$. Fermat's little theorem shows $a_p\bmod p$ is constant for sufficiently large primes $p$, and hence ${\bf t}\in\mathbb{Q}\subset\cA$.
\smallskip

If $f$ is irreducible, let $L=\mathbb{Q}(\sqrt{D})$ be its splitting field where $D\neq 1$ is square-free, and let its roots be $\lambda,\overline{\lambda}$. 
Then $a_n=u\lambda^n+\overline{u}\overline{\lambda}^n$ for some $u\in L$ and we define
\[c_+=u\lambda+\overline{u}{\overline{\lambda}}\qquad\text{and}\qquad
c_-=u\overline{\lambda}+\overline{u}\lambda.\]
These are clearly rational since they are fixed by conjugation. 
For every sufficiently large prime $p$, the Frobenius map of $L$ at $p$ either fixes or interchanges $\lambda,\overline{\lambda}$, according as $\leg{D}{p}=+1$ or $-1$. 
Thus $a_p\equiv c_+\bmod p$ in the first case and $a_p\equiv c_-\bmod p$ in the second, and we obtain 
\[{\bf t}=\left(r+s\leg{D}{p}\right)_p\qquad\text{where}\; 
r=\frac{c_++c_-}{2}\;\;\text{and}\;\; s=\frac{c_+-c_-}{2}.\]

Conversely, suppose ${\bf t}=\left(r+s\left(\frac{D}{p}\right)\right)_p$ where $D\neq 1$ is square-free. Set 
\[L=\mathbb{Q}(\sqrt{D}),\qquad\lambda=1+\sqrt{D},\qquad u=\frac{1}{2}\left(r+\frac{s}{D}\sqrt{D}\right)\] 
and define the sequence $a_n=u\lambda^n+\overline{u}\overline{\lambda}^n$.
Then $a_n\in\mathbb{Q}$ and $(a_n)_n\in\Rec(f;\mathbb{Q})$ where
\[f(x)=(x-\lambda)(x-\overline{\lambda})=x^2-2x+1-D.\]
Moreover, the Frobenius map of $L$ at $p$ acts on $\sqrt{D}$ via
$\sqrt{D}\mapsto\leg{D}{p}\sqrt{D}$ 
and we obtain 
\[a_p\equiv r+s\leg{D}{p}\bmod p\]
for every sufficiently large prime $p$. Thus $\deg_\cA({\bf t})\leq 2$ as required. 
We remark that this is essentially the construction appearing in Example 3.2 of \cite{RTTY24} where there are two quadratic Frobenius idempotents
\[\frac{1}{2}\left(1+\leg{D}{p}\right)_p\qquad\text{and}\qquad
\frac{1}{2}\left(1-\leg{D}{p}\right)_p.\]

Finally, if $s=0$, then ${\bf t}=r\in\mathbb{Q}$ and $\deg_\cA({\bf t})=1$. 
If ${\bf t}=q\in\mathbb{Q}$, then comparison on infinitely many split and inert primes give $q=r+s$ and $q=r-s$ respectively, forcing $s=0$. 
Hence $s\neq 0$ implies $\deg_\cA({\bf t})=2$.
\end{proof}
\begin{proof}[Proof of Theorem \ref{degtwo}]
By Proposition \ref{quadinA}, we have $r,s\in\mathbb{Q}$ and a square-free $D\neq 1$ such that
\[\log_\cA(\alpha)=\left(r+s\leg{D}{p}\right)_p.\]
Let $N$ be the conductor of the quadratic character associated with $L=\mathbb{Q}(\sqrt{D})$. 
For every sufficiently large prime $p\equiv 1\bmod N$, we have $\leg{D}{p}=1$. 
In particular, restricting to these primes, we have $\log_{\cA(N)}(\alpha)=r+s\in\mathbb{Q}$. 
By Theorem \ref{logAm}(i), this cannot be a non-zero rational number so $r+s=0$ and 
\[\log_\cA(\alpha)=r\left(1-\leg{D}{p}\right)_p\]
as asserted. 
Now suppose that the $abc$-conjecture holds. Since $\leg{D}{p}=1$ for every sufficiently large prime $p\equiv 1\bmod N$, the preceding formula gives $\log_{\cA(N)}(\alpha)=0$, contradicting Theorem \ref{logAm}(ii). We conclude that $\deg_\cA(\log_\cA(\alpha))>2$.
\end{proof}
\begin{remark}
Assuming the $abc$-conjecture, the same argument shows that $\log_\cA(\alpha)\not\in\im(r_f)$ whenever the splitting field $L/\mathbb{Q}$ of $f(x)$ is abelian. 
Briefly, suppose that $(a_n)_n\in\Rec(f;\mathbb{Q})$ represents $\log_\cA(\alpha)$. 
After omitting any eventually vanishing contributions from zero roots, we may write
\[a_n=\sum_i P_i(n)\lambda_i^n\]
where $\lambda_i$ are the distinct roots of $f(x)$ in $L$ and $P_i(x)\in L[x]$. 
By the Kronecker-Weber theorem, $L\subseteq\mathbb{Q}(\zeta_m)$ for some $m$ and, for sufficiently large prime $p\equiv 1\bmod m$, the Frobenius map acts trivially on $L$. Hence
\[a_p\equiv c=\sum_i P_i(0)\lambda_i\bmod p\]
where $c$ is Galois invariant, so $c\in\mathbb{Q}$. It follows that $\log_{\cA(m)}(\alpha)=c$ and Theorem \ref{logAm}(i) implies $c=0$, contradicting Theorem \ref{logAm}(ii). 

Given $\alpha\in\mathbb{Q}^\times\setminus\{\pm 1\}$, it is natural to expect that $\log_\cA(\alpha)\not\in\im(r_f)$ for any characteristic polynomial $f(x)$ and so $\deg_\cA(\log_\cA(\alpha))=\infty$. However, the non-abelian case where splitting is not characterised by congruence conditions is not amenable to a straightforward cyclotomic approach.
\end{remark}
\begin{remark}
Even further, one would expect finite logarithms to be ``naively'' transcendental, i.e. not in the integral closure $\cC_\cA$ of $\mathbb{Q}$ in $\cA$. See \cite{MS26} for multiple results on naive transcendence in $\cA$, as well as the comparison between $\cC_\cA$ and Rosen's finite algebraic numbers. 
We can at least get close to showing $\log_\cA(\alpha)$ is not a root of a quadratic polynomial. 
Firstly, an irreducible quadratic $f(x)\in\mathbb{Q}[x]$ cannot vanish at $\log_\cA(\alpha)$, since its discriminant is a quadratic non-residue modulo infinitely many primes. 
Secondly, a minor modification of the argument in Theorem \ref{logsquare} shows that $\log_\cA(\alpha)$ is not a root of $f(x)=(x-r)(x-s)$ where $r,s\in\mathbb{Q}^\times$. Assuming the $abc$-conjecture, we also know $\log_\cA(\alpha)$ is not a root of $f(x)=x^2$. 
However, this leaves the possibility that $\log_\cA(\alpha)$ is a root of $f(x)=x(x-r)$ for some $r\in\mathbb{Q}^\times$. This would mean $\log_\cA(\alpha)$ is a rational multiple of a non-trivial idempotent in $\cA$, and ruling out this mixed zero/non-zero case appears to require further ideas.
\end{remark}

\end{document}